\def\sqr#1#2{{\vcenter{\vbox{\hrule height.#2pt
              \hbox{\vrule width.#2pt height#1pt \kern#1pt \vrule width.#2pt}
              \hrule height.#2pt}}}}
\def\3n{\negthinspace \negthinspace \negthinspace }
\def\2n{\negthinspace \negthinspace }
\def\1n{\negthinspace }
\def\={\buildrel \triangle \over =}
\def\({\Big (}
\def\){\Big )}
\def\[{\Big[}
\def\]{\Big]}
\def\bde{\begin{definition}}
\def\ede{\end{definition}}
\def\be{\begin{equation}}
\def\bel{\begin{equation}\label}
\def\ee{\end{equation}}
\def\bt{\begin{theorem}}
\def\et{\end{theorem}}
\def\bc{\begin{corollary}}
\def\ec{\end{corollary}}
\def\bl{\begin{lemma}}
\def\el{\end{lemma}}
\def\bp{\begin{proposition}}
\def\ep{\end{proposition}}
\def\bas{\begin{assumption}}
\def\eas{\end{assumption}}
\def\br{\begin{remark}}
\def\er{\end{remark}}
\def\ba{\begin{array}}
\def\ea{\end{array}}
\def\ed{\end{document}}
\def\square#1{\vbox{\hrule\hbox{\vrule height#1%
     \kern#1\vrule}\hrule}}
\def\rectangle#1#2{\vbox{\hrule\hbox{\vrule height#1%
     \kern#2\vrule}\hrule}}
\font\tenbb=msbm10 \font\sevenbb=msbm7 \font\fivebb=msbm5
\newtheorem{lemma}{Lemma}[section]
\newtheorem{theorem}[lemma]{Theorem}
\newtheorem{remark}[lemma]{Remark}
\newtheorem{example}[lemma]{Example}
\newtheorem{corollary}[lemma]{Corollary}
\newtheorem{definition}[lemma]{Definition}
\newtheorem{proposition}[lemma]{Proposition}
\newtheorem{assumption}[lemma]{Assumption}
\begin{document}
\title{Characterization by detectability inequality for periodic  stabilization of linear time-periodic evolution systems}

\author{
%Gengsheng Wang\thanks{
%Center for Application Mathematics, Tianjin University, Tianjin, 300072, China (\texttt{wanggs@yeah.net})}\quad
Yashan Xu\thanks{School of Mathematical Sciences, Fudan University, KLMNS, Shanghai, 200433,
China  (\texttt{yashanxu@fudan.edu.cn}). This work is
supported in part by NNSF Grant  11871166.}}

\date{}

\maketitle

\begin{abstract}
Given a linear time-periodic control system in a Hilbert space with a bounded control operator, we present a characterization of periodic stabilization in terms of a detectability inequality. Similar  characterization
was built up in \cite{Trelat2020} for time-invariant systems.
 \end{abstract}

\textbf{Keywords:} Periodic evolution systems, periodic  stabilization,  detectability inequality.

\medskip

%\textbf{AMS subject classifications:}

\section{Introduction}
\paragraph{Control system.}
 Let  $Y$ be a real Hilbert space (state space) with the norm and the inner product
$\Vert\cdot\Vert$ and $\langle\cdot,\cdot\rangle$ respectively. Let $U$ be
another real Hilbert space (control space) with the norm and the inner product
$\Vert\cdot\Vert_U$ and $\langle\cdot,\cdot\rangle_U$ respectively.
We identify  $Y$ (resp., $U$) with its dual $Y'$ (resp., $U'$).
Let $T>0$ be arbitrarily given.
In this paper, we will study the periodic stabilization for  the linear control system:
\begin{equation}\label{contsyst}
y'(t) = A(t)y(t)+B(t)u(t),\qquad t\in \mathbb{R}^+\triangleq[0,\infty),
\end{equation}
under  the following  hypotheses:
\begin{enumerate}[($H_1$)]
\item
The family of operators $\{A(t)\}_{t\geq 0}$ satisfies that
$A(t)=A+D(t)$ for a.e. $t\in (0,\infty)$, where the operator $A$, with its domain  $D(A)\subset Y$, generates a  $C_0$-semigroup $\{S(t)\}_{t\geq 0}$ on  $Y$; and the operator-valued function $D(\cdot)\in L^1_{loc}(0,\infty;{\cal L}(Y))$ is $T$-periodic in time, i.e., $D(t+T)=D(t)$ for a.e. $t\in \mathbb{R}^+$.
\item  The operator-valued function $B(\cdot)\in L^\infty(0,\infty;{\cal L}(U,Y))$ is $T$-periodic.
(We denote its norm by $\|B\|_{L^\infty}$.)
\item Each control  $u$ is taken from the space $L^2(0,\infty; U)$.
 \end{enumerate}
Given $u\in L^2(t,\infty;U)$, $z\in Y$ and $t\geq 0$, we write  $y(\cdot;t,z,u)$
 for the solution of the equation (\ref{contsyst}) over $[t,\infty)$ with the initial condition: $y(t)=z$.
 (When $z\in Y$ and $u\in L^2(0,\hat T;U)$ for some $\hat T>0$, we still use $y(\cdot;0,z,u)$ to denote
  the solution
  $y(\cdot;0,z,\hat u)$, where $\hat u=u$ over $(0,\hat T)$ and $\hat u=0$ over $(\hat T,\infty)$.)
Let
\begin{equation*}
E\triangleq\left\{(t,s)\in\mathbb{R}^+\times\mathbb{R}^+\bigm|0\le s\le t<\infty\right\}.
\end{equation*}
Let $\Phi(\cdot,\cdot): E\rightarrow {\cal L}(Y)$ be the evolution system generated by $A(\cdot)$.
(When $t\geq s\geq 0$, we denote by $\|\Phi(t,s)\|_{\mathcal{L}(Y)}$ the operator norm of $\Phi(t,s)$.)
 Then we have that (see   Lemma 5.6  on Page 68 in  \cite{Li}) that $\Phi(\cdot,\cdot)$ is strongly continuous over $E$,
and that
\begin{equation}\label{1.02}
\Phi(t,s)z=S(t-s)z+\displaystyle\int_s^tS(t-r)B(r)\Phi(r,s)z\mathrm dr,\;\;\mbox{when}\;\; 0\leq s\leq t<\infty\;\;\mbox{and}\;\; z\in Y.
\end{equation}
Moreover, it follows by  $(H_1)$-$(H_2)$ and (\ref{1.02}) that
\begin{equation}\label{1.05}
\Phi(t+T, s+T)=\Phi(t,s)\;\; \text{ for any }\;0\le s\le t<\infty.
\end{equation}

\medskip

\paragraph{Concepts on the stabilization.}
Several  concepts related to the periodic stabilization of the system (\ref{contsyst})  are given in order.

\begin{itemize}
\item The system (\ref{contsyst}) is said to be periodically exponentially stabilizable (periodically stabilizable, for short), if there is
$K(\cdot)\in L^\infty(\mathbb{R}^+;{\cal L}(Y,U))$, with $K(T+t)=K(t)$ for a.e. $t\in \mathbb{R}^+$,  so that
the system $y'(t)=\left[A(t)+B(t)K(t)\right]y(t)$ ($t\geq0$) is stable, i.e., for some  $M>0$ and $\omega>0$,
\begin{equation}\label{1.06}
\|\Phi_K(t,s) z\|\leq M e^{-\omega (t-s)}\|z\|\qquad\mbox{for any } z\in Y\;\mbox{and}\;t\geq s\geq0.
\end{equation}
Here and thereafter,
$\Phi_K(\cdot,\cdot): E\rightarrow {\cal L}(Y)$  denotes the evolution system generated by $A_K(\cdot) \triangleq A(\cdot) +B(\cdot)K(\cdot)$.

\item Given $n\in \mathbb{N}^+\triangleq\{1,2,\dots\}$, the following system is called  an  adjoint equation   of  \eqref{contsyst} over $[0,nT]$:
\begin{equation}\label{adjoint-equation}
\begin{cases}
\dot\varphi_n(t)=-A^*(t)\varphi_n(t) \quad\text{in }[0, nT],\\
\varphi_n(nT)=\psi.
\end{cases}
\end{equation}
where $\psi\in Y$. {\it We write $\varphi_n(\cdot; \psi)$ for the solution to the system \eqref{adjoint-equation}.}
\end{itemize}

\paragraph{Main result.} The main result of this paper is to present a characterization by an inequality
for the periodic stabilization of the system
 \eqref{contsyst}.

\begin{theorem}\label{maintheorem1}
Suppose that $(H_1)$-$(H_3)$ are true. Then the following statements are equivalent:
\begin{enumerate}[($ {E}_1$)]
\item The  system (\ref{contsyst}) is periodically stabilizable.
\item For any $ \delta\in(0,1)$, there is $n_\delta\in\mathbb{N}^+$ and $ C_{\delta}>0$ so that for any  $k\in\mathbb{N}^+$,
\begin{equation}\label{1.10}
\|\varphi_{kn_\delta}(0;\psi)\|\leq\delta^k\|\psi\|
+C_{\delta}\|B(\cdot)^*\varphi_{kn_\delta}(\cdot;\psi)\|_{L^2(0,kn_\delta T;U)},\;\;\mbox{when}\;\;\psi\in Y.
\end{equation}
\item There is $ \delta\in(0,1)$,  $n\in\mathbb{N}^+$  and $C>0$ so that
\begin{equation}\label{1.11}
\|\varphi_n(0;\psi)\|\le\delta\|\psi\|
+C\|B(\cdot)^*\varphi_n(\cdot;\psi)\|_{L^2(0,nT;U)},\;\;\mbox{when}\;\;\psi\in Y.
\end{equation}
\end{enumerate}
\end{theorem}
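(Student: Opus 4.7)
The plan is to establish three implications. The implication $(E_2)\Rightarrow(E_3)$ is immediate: fix any $\delta_0\in(0,1)$ and specialize \eqref{1.10} to $k=1$ with $n:=n_{\delta_0}$, $C:=C_{\delta_0}$. For $(E_1)\Rightarrow(E_2)$, I would start from a $T$-periodic stabilizing feedback $K$ with closed-loop decay $\|\Phi_K(t,s)\|\le Me^{-\omega(t-s)}$. Taking adjoints in the variation-of-constants identity expressing $\Phi_K$ in terms of $\Phi$ gives
\begin{equation*}
\Phi(t,s)^{*}\psi=\Phi_K(t,s)^{*}\psi-\int_s^{t}\Phi_K(r,s)^{*}K(r)^{*}B(r)^{*}\Phi(t,r)^{*}\psi\,\mathrm dr.
\end{equation*}
Setting $t=kn_\delta T$, $s=0$ and identifying $\Phi(kn_\delta T,r)^{*}\psi$ with $\varphi_{kn_\delta}(r;\psi)$, the first term is bounded by $Me^{-\omega kn_\delta T}\|\psi\|$ while the integral is bounded, via Cauchy--Schwarz with the weight $e^{-\omega r}$, by $(M\|K\|_{L^\infty}/\sqrt{2\omega})\,\|B(\cdot)^{*}\varphi_{kn_\delta}(\cdot;\psi)\|_{L^2}$, a constant independent of $k$. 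Choosing $n_\delta$ so large that $Me^{-\omega n_\delta T}\le\delta$ then gives $Me^{-\omega kn_\delta T}\le\delta^k$ for every $k\ge 1$, which is \eqref{1.10}.

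The main work is $(E_3)\Rightarrow(E_1)$. I would introduce the input-to-state operator $\mathcal O:L^2(0,nT;U)\to Y$ by $\mathcal Ou=\int_0^{nT}\Phi(nT,s)B(s)u(s)\,\mathrm ds$, whose adjoint is $(\mathcal O^{*}\psi)(s)=B(s)^{*}\varphi_n(s;\psi)$, so that \eqref{1.11} reads $\|\Phi(nT,0)^{*}\psi\|\le\delta\|\psi\|+C\|\mathcal O^{*}\psi\|$ for every $\psi\in Y$. A quantitative Douglas-type lemma in Hilbert spaces, proved by Hahn--Banach separation together with the weak compactness of the product ball $\delta\|z\|B_Y\times C\|z\|B_{L^2}$ and the weak continuity of $(\eta,u)\mapsto\eta+\mathcal Ou$, then converts this into the reachability statement: for every $z\in Y$ there exists $u_z\in L^2(0,nT;U)$ with $\|u_z\|_{L^2}\le C\|z\|$ and $\|y(nT;0,z,u_z)\|\le\delta\|z\|$. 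Iterating this one-period contraction on the successive intervals $[knT,(k+1)nT]$, exploiting the $T$-periodicity to transport the same bound across each period, and concatenating the resulting pieces produces, for every $z$, an input $u\in L^2(0,\infty;U)$ with the uniform quadratic bound
\begin{equation*}
\int_0^\infty\bigl(\|y(t;0,z,u)\|^{2}+\|u(t)\|_U^{2}\bigr)\,\mathrm dt\le M_0\|z\|^{2},
\end{equation*}
where $M_0$ depends only on $\delta$, $C$, $\|B\|_{L^\infty}$ and $\sup_{0\le s\le t\le nT}\|\Phi(t,s)\|$.

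The final step is to extract a genuine $T$-periodic linear feedback from this quadratic admissibility. I would invoke the infinite-horizon LQ theory for $T$-periodic linear systems: under the admissibility above, the value function $V(z)=\tfrac12\inf_u\int_0^\infty(\|y\|^{2}+\|u\|_U^{2})\,\mathrm dt$ is a bounded positive quadratic form $V(z)=\tfrac12\langle P(0)z,z\rangle$, where $P(\cdot)\in L^\infty(\mathbb{R}^+;\mathcal L(Y))$ is a nonnegative $T$-periodic solution of the associated periodic Riccati equation. The feedback $K(t):=-B(t)^{*}P(t)$ is then bounded and $T$-periodic, and a Datko-type argument upgrades the $L^2$-integrability of the closed-loop trajectories to exponential decay \eqref{1.06}, yielding $(E_1)$.

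The main obstacle is precisely this last step: turning a family of state-dependent open-loop controls into a single linear $T$-periodic feedback. It is what makes the periodic Riccati/LQ machinery indispensable, and it is also where one must verify that the constructed feedback inherits the period $T$ rather than $nT$ (despite the fact that the one-period contraction was extracted on a window of length $nT$). The Douglas-type reduction and the geometric iteration are routine in comparison, though the former requires care because the right-hand side of \eqref{1.11} is a \emph{sum} of two terms rather than a single observation norm.
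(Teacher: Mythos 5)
Your proposal is correct, and the two directions $(E_2)\Rightarrow(E_3)$ and $(E_3)\Rightarrow(E_1)$ follow essentially the paper's route: the Hahn--Banach/Riesz duality converting \eqref{1.11} into the one-period contraction-with-bounded-control property is the paper's Lemma \ref{lemma2}, the geometric concatenation over successive windows $[knT,(k+1)nT]$ is the paper's Lemma \ref{lemma3}, and the passage from finite quadratic cost to a $T$-periodic exponentially stabilizing feedback is exactly the paper's Lemma \ref{lemma1}, which the paper does not reprove but imports from the periodic LQ/Riccati theory of Wang--Xu; your sketch of that step (value function as a bounded quadratic form, periodic Riccati solution, $K=-B^{*}P$, Datko) is the standard proof of the cited result, and your worry about the feedback inheriting period $T$ rather than $nT$ is resolved there precisely because the LQ problem itself is posed with the original $T$-periodic data. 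Where you genuinely diverge is $(E_1)\Rightarrow(E_2)$: you work directly on the adjoint side, taking adjoints in the perturbation identity relating $\Phi$ and $\Phi_K$ and estimating the two resulting terms, which yields \eqref{1.10} in one stroke with $C_\delta=M\|K\|_{L^\infty}/\sqrt{2\omega}$ and the $\delta^k$ decay coming from $Me^{-\omega kn_\delta T}\le(Me^{-\omega n_\delta T})^{k}\le\delta^{k}$ (using $M\ge1$, which is automatic from \eqref{1.06} at $t=s$). The paper instead first establishes the single-period inequality \eqref{1.11} by duality, then squares it and iterates it along $\psi=[\Phi(n_\delta T,0)^{*}]^{\ell-1}z$ with telescoping weights $(2\delta^2)^{k-\ell}$, at the cost of an extra factor $\sqrt2$ absorbed into $\delta$ and $C_\delta$. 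Your argument is more direct and avoids the squaring/telescoping bookkeeping; the paper's has the structural advantage of reusing Lemmas \ref{lemma2} and \ref{lemma3} symmetrically in both directions, so that the whole theorem reduces to the chain of equivalences among the primal contraction property, the dual inequality, and finiteness of $W$.
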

Several notes on Theorem \ref{maintheorem1} are given in order.
\begin{itemize}
\item
The similar equivalence results in Theorem \ref{maintheorem1} were obtained in \cite{Trelat2020} for  time-invariant systems. Different kinds of   characterizations of the periodic stabilization for time-periodic systems have been studied in \cite{Lundaridi}, \cite{Prato},
         \cite{WangXu} and \cite{WangXu-book}. The  characterization (for the system \eqref{contsyst}), given in Theorem \ref{maintheorem1}, seems to be new.
    \item We prefer to call  \eqref{1.10}  (or \eqref{1.11})  a detectability inequality rather than
    a weak observability inequality (which was used in  \cite{Trelat2020}). The reason will be given in
    Subsection \ref{subsection3.1}.
    \item It is well known that the null controllability of \eqref{contsyst} is equivalent to the standard observability inequality; the null controllability of \eqref{contsyst} implies the periodic stabilization of
        \eqref{contsyst}. (For the later, we refer readers to  \cite{WangXu-book}.)
Comparing the standard observability inequality and the detectability inequality \eqref{1.11} (or \eqref{1.10}), we see that the gap between the null controllability and the periodic stabilization
    can be quantified by adding the term $\delta\|\psi\|$ on the right hand side of the standard observability inequality.

\end{itemize}

The rest of the paper is organized as follows: Section 2 proves Theorem \ref{maintheorem1}.
 Section 3 gives some further discussions on the periodic stabilization.

\section{Proof of Theorem \ref{maintheorem1}}
Several lemmas  will be used in the proof of Theorem \ref{maintheorem1}.
The first one, i.e., Lemma \ref{lemma1},  is a direct consequence of  Theorem 1.4  in \cite{WangXu-book}.
To state it, we define, for each  $z\in Y$, the LQ problem:
\begin{equation}\label{1.15}
(LQ)_{z}\hspace{110pt} W(z)\triangleq \inf_{u\in L^2(0,\infty;U)}J(u;z),\hspace{110pt}\mbox{}
\end{equation}
where
 \begin{equation}\label{1.13}
J(u;z)\triangleq\int^{\infty}_0\Bigr[\|y(s; 0,z,u)\|^2+ \|u(s)\|^2_U\Bigl]\mathrm{d}s,
\;\; u\in L^2(0,\infty;U).
\end{equation}
\begin{lemma}\label{lemma1}
The following assertions  are equivalent:
\begin{enumerate}[(i)]
\item The system (\ref{contsyst}) is  periodically  stabilizable.
\item  The functional $W(\cdot)$, given by \eqref{1.15}, is finite valued, i.e.,   $W(z)<\infty$ for each $z\in Y$.
\end{enumerate}
\end{lemma}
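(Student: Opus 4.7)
The plan is to prove the two implications separately, with (i)$\Rightarrow$(ii) being essentially immediate and (ii)$\Rightarrow$(i) carrying the bulk of the argument. Since the lemma is attributed to Theorem 1.4 of \cite{WangXu-book}, the goal is to reduce the claim to standard LQ theory for $T$-periodic evolution systems and identify where the periodicity actually enters.

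For (i)$\Rightarrow$(ii), I would take a $T$-periodic feedback $K(\cdot)\in L^\infty(\mathbb{R}^+;\mathcal{L}(Y,U))$ satisfying \eqref{1.06}, fix $z\in Y$, and test with the admissible control $u(t):=K(t)\Phi_K(t,0)z$. Essential boundedness of $K$ together with the exponential decay of $\Phi_K$ places both $y(\cdot;0,z,u)=\Phi_K(\cdot,0)z$ and $u$ in their respective $L^2$ spaces, so $J(u;z)<\infty$ and hence $W(z)<\infty$.

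For (ii)$\Rightarrow$(i), I would first upgrade pointwise finiteness of $W$ to a quadratic bound $W(z)\le c\|z\|^2$: the functional $W$ is nonnegative, convex and lower semicontinuous (since $J(\cdot;z)$ is coercive and strictly convex in $u$ and the input-to-state map is continuous), so uniform boundedness on the unit ball follows by a closed graph argument. Next, define the time-shifted value function $V(t,z)$ obtained by restarting the LQ problem at time $t$; the periodicity of $A(\cdot)$, $B(\cdot)$ together with \eqref{1.05} give $V(t+T,z)=V(t,z)$, and the quadratic bound yields a representation $V(t,z)=\langle P(t)z,z\rangle$ with $P(t)\in\mathcal{L}(Y)$ nonnegative, $T$-periodic and uniformly bounded. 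Dynamic programming combined with the quadratic structure then produces an optimal pair $(y^*,u^*)$ in feedback form $u^*(t)=-B(t)^*P(t)y^*(t)$, so that $K(t):=-B(t)^*P(t)$ is a $T$-periodic essentially bounded feedback and every closed-loop trajectory satisfies $\int_0^\infty\|\Phi_K(s,0)z\|^2\,ds\le W(z)\le c\|z\|^2$.

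The remaining step, which I expect to be the main obstacle, is to upgrade $L^2$-integrability of the closed-loop evolution to the uniform exponential estimate \eqref{1.06}. The natural route is a Datko-type argument adapted to the $T$-periodic setting: the periodicity relation \eqref{1.05} forces $\Phi_K(nT,0)=\Phi_K(T,0)^n$, so the iterates of the monodromy operator behave like a discrete semigroup to which the classical Datko theorem applies, and uniform boundedness of $\Phi_K(t,s)$ over a single period then interpolates exponential decay between integer multiples of $T$. Everything before this step is largely formal; the conversion from $L^2$-decay to uniform exponential decay via the monodromy operator is where the $T$-periodicity is essential and is the technical core of the lemma.
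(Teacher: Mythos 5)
The paper does not actually prove this lemma: it is stated as ``a direct consequence of Theorem 1.4 in \cite{WangXu-book}'' and no argument is given. Your proposal reconstructs what is essentially the standard proof underlying that cited result, and the overall architecture is sound: the easy direction via the feedback control $u(t)=K(t)\Phi_K(t,0)z$ is exactly right, and the hard direction via a quadratic bound on $W$, a $T$-periodic Riccati-type operator $P(t)$ with feedback $K(t)=-B(t)^*P(t)$, and a Datko/monodromy argument upgrading $L^2$-decay of $\Phi_K(\cdot,0)z$ to the uniform exponential estimate \eqref{1.06} is the route taken in the periodic LQ literature (Da Prato--Lunardi, Wang--Xu). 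You also correctly identify the monodromy step $\Phi_K(nT,0)=\Phi_K(T,0)^n$ as the place where periodicity is indispensable. One technical caveat in your first step: $W$ is an infimum of continuous functions of $z$, and such infima are in general only upper semicontinuous, so lower semicontinuity of $W$ does not come for free; you need either a weak-compactness argument (extract weakly convergent minimizing controls and use weak lower semicontinuity of $J$) or, more cleanly, the observation that the optimal-control map $z\mapsto u_z$ is linear (by uniqueness and strict convexity) and closed, whence bounded by the closed graph theorem, giving $W(z)=J(u_z;z)\le c\|z\|^2$ directly. With that repair, and granting the standard (but nontrivial) feedback-synthesis step producing $P(\cdot)$, your outline is a faithful and correct account of the proof the paper delegates to the reference.
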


The second one, i.e.,  Lemma \ref{lemma2}, takes some ideas from  Proposition 6 in \cite{Trelat2020}.

\begin{lemma} \label{lemma2}
Let $ \delta\in(0,1)$, $n\in\mathbb{N}^+$, and $ C>0$. Then the following statements are equivalent:
\begin{enumerate}[(i)]
\item The system (\ref{contsyst}) has the property: for any $z\in Y$, there is $u_z\in L^2(0,nT;U)$ so that
\begin{equation}\label{estimate-state-control}
\|y(n T;0,z,u_z)\|\leq\delta\|z\|\qquad\text{and}\qquad
\|u_z\|_{ L^2(0,n T;U)}\leq C\|z\|.
\end{equation}
\item The adjoint equation \eqref{adjoint-equation} has the property:
\begin{equation}\label{1.17}
\|\varphi_n(0;\psi)\|\leq\delta\|\psi\|
+C\|B(\cdot)^*\varphi_n(\cdot;\psi)\|_{L^2(0,nT;U)}\qquad {\mbox{for any }}\psi\in Y.
\end{equation}
\end{enumerate}

\end{lemma}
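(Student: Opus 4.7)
The plan rests on the duality identity
$$\langle y(nT;0,z,u),\psi\rangle = \langle z,\varphi_n(0;\psi)\rangle + \int_0^{nT}\langle u(s), B(s)^*\varphi_n(s;\psi)\rangle_U\,ds,$$
which follows from the variation-of-constants formula $y(nT;0,z,u)=\Phi(nT,0)z+\int_0^{nT}\Phi(nT,s)B(s)u(s)\,ds$ together with the fact that $\varphi_n(s;\psi)=\Phi(nT,s)^*\psi$. For the easy direction (i)$\Rightarrow$(ii), I would fix $\psi\in Y$, take $z=\varphi_n(0;\psi)$, and choose a control $u_z$ as in (i); the identity together with Cauchy--Schwarz then yields
$$\|\varphi_n(0;\psi)\|^2 \leq \delta\,\|\varphi_n(0;\psi)\|\,\|\psi\| + C\,\|\varphi_n(0;\psi)\|\,\|B(\cdot)^*\varphi_n(\cdot;\psi)\|_{L^2(0,nT;U)},$$
and dividing by $\|\varphi_n(0;\psi)\|$ (the inequality is trivial when it vanishes) produces \eqref{1.17}.

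The harder direction (ii)$\Rightarrow$(i) I would handle by a penalized HUM-type variational construction. For fixed $z\in Y$ and $\epsilon>0$, the plan is to minimize over $\psi\in Y$ the strictly convex, continuous, coercive functional
$$J_\epsilon(\psi) := \tfrac{1}{2}\|B(\cdot)^*\varphi_n(\cdot;\psi)\|_{L^2(0,nT;U)}^2 + \delta\|z\|\,\|\psi\| - \langle z,\varphi_n(0;\psi)\rangle + \tfrac{\epsilon}{2}\|\psi\|^2,$$
producing a unique minimizer $\bar\psi_\epsilon$. Setting $u_\epsilon(s):=-B(s)^*\varphi_n(s;\bar\psi_\epsilon)$, the Euler--Lagrange condition $0\in\partial J_\epsilon(\bar\psi_\epsilon)$, after dualizing through the adjoint identities, yields the explicit formula $y(nT;0,z,u_\epsilon)=\delta\|z\|\xi_\epsilon+\epsilon\bar\psi_\epsilon$, where $\xi_\epsilon\in\partial\|\bar\psi_\epsilon\|$ satisfies $\|\xi_\epsilon\|\leq 1$. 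Comparing $J_\epsilon(\bar\psi_\epsilon)\leq J_\epsilon(0)=0$ and invoking (ii) applied to $\bar\psi_\epsilon$ then gives the $\epsilon$-uniform estimates $\|u_\epsilon\|_{L^2}\leq 2C\|z\|$ and $\epsilon\|\bar\psi_\epsilon\|^2\leq 4C^2\|z\|^2$. Extracting a weak limit $u_\epsilon\rightharpoonup u_z$ in $L^2(0,nT;U)$ as $\epsilon\to 0^+$ and passing to the limit through the bounded linear map $u\mapsto y(nT;0,z,u)$ will deliver $\|u_z\|_{L^2}\leq 2C\|z\|$ together with $\|y(nT;0,z,u_z)\|\leq\delta\|z\|$, which is (i) up to the harmless replacement of $C$ by $2C$.

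The main obstacle is the absence of coercivity of the natural un-penalized functional (i.e., the case $\epsilon=0$): the combination $\delta\|z\|\,\|\psi\| - \langle z,\varphi_n(0;\psi)\rangle$ is only nonnegative and can fail to grow when $\varphi_n(0;\psi)$ saturates the bound $\delta\|\psi\|$ in the direction of $z$. This is precisely why the vanishing-viscosity-type penalization $\tfrac{\epsilon}{2}\|\psi\|^2$, together with the subsequent limit $\epsilon\to 0^+$, is indispensable both for existence of the minimizer and for reading off the residual error $\epsilon\bar\psi_\epsilon$ in the final state. Handling the subdifferential of $\|\cdot\|$ at the origin is a mild technicality that is readily absorbed into the convex-analytic formulation.
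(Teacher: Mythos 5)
Your direction (i)$\Rightarrow$(ii) is essentially the paper's argument: the paper writes $\|\varphi_n(0;\psi)\|$ as $\sup_{z\neq0}\langle z,\varphi_n(0;\psi)\rangle/\|z\|$ and bounds it via the same duality identity, whereas you simply evaluate at the maximizing choice $z=\varphi_n(0;\psi)$; these are the same computation. For (ii)$\Rightarrow$(i) you take a genuinely different route. The paper works on the subspace $\widetilde Y=\{(\psi,B(\cdot)^*\varphi_n(\cdot;\psi))\}\subset Y\times L^2(0,nT;U)$ equipped with the norm $\delta\|\psi\|+C\|B(\cdot)^*\varphi_n(\cdot;\psi)\|_{L^2}$, observes that $\psi\mapsto\langle z,\varphi_n(0;\psi)\rangle$ has norm at most $\|z\|$ there by \eqref{1.17}, extends by Hahn--Banach, and reads off $(y_z,u_z)$ from the Riesz representation; this is soft, short, and delivers \emph{exactly} the constants $\delta$ and $C$. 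Your penalized HUM construction is correct and more constructive (the control arises as a weak limit of explicitly characterized minimizers $u_\epsilon=-B^*\varphi_n(\cdot;\bar\psi_\epsilon)$), and the identity $y(nT;0,z,u_\epsilon)=\delta\|z\|\xi_\epsilon+\epsilon\bar\psi_\epsilon$ together with your uniform bounds does go through. The one point to flag is that, as written, you only obtain $\|u_z\|_{L^2}\leq 2C\|z\|$, so you prove the equivalence only up to doubling the constant, whereas the lemma (and the Hahn--Banach proof) asserts it with the same $C$; this is harmless for every use of the lemma in the paper, and in any case the loss is avoidable: instead of comparing $J_\epsilon(\bar\psi_\epsilon)\leq J_\epsilon(0)$, pair the Euler--Lagrange inclusion with $\bar\psi_\epsilon$ itself, using $\langle\xi_\epsilon,\bar\psi_\epsilon\rangle=\|\bar\psi_\epsilon\|$, to get
\begin{equation*}
\|B(\cdot)^*\varphi_n(\cdot;\bar\psi_\epsilon)\|_{L^2}^2+\delta\|z\|\,\|\bar\psi_\epsilon\|+\epsilon\|\bar\psi_\epsilon\|^2=\langle z,\varphi_n(0;\bar\psi_\epsilon)\rangle\leq\|z\|\bigl(\delta\|\bar\psi_\epsilon\|+C\|B(\cdot)^*\varphi_n(\cdot;\bar\psi_\epsilon)\|_{L^2}\bigr),
\end{equation*}
which yields $\|u_\epsilon\|_{L^2}\leq C\|z\|$ and $\epsilon\|\bar\psi_\epsilon\|^2\leq C^2\|z\|^2$, recovering the sharp constant.
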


\begin{proof}
$(i)\Longrightarrow(ii)$. Arbitrarily fix  $\psi\in Y$ and $z\in Y$. Let $u_z\in L^2(0,nT;U)$
be given by $(i)$. Then by (\ref{contsyst}) and \eqref{adjoint-equation}, we have
that for any $z\in Y$,
$$
\langle y(nT;0,z,u_z),\psi\rangle-\langle z,\varphi_n(0;\psi)\rangle=\int^{nT}_0\langle u_z(t),\, B(t)^*\varphi_n(t;\psi)\rangle_U\mathrm dt,
$$
where  $u_z\in L^2(0,nT;U)$
is given by $(i)$.
From the above, it follows  that
\begin{eqnarray*}
\|\varphi_n(0;\psi)\|
&=&
\sup\limits_{z\neq 0}\frac{\langle z,\varphi_n(0;\psi)\rangle}{\|z\|}\\
&=&
\sup\limits_{z\neq 0}\frac{ \langle y(nT;0,z,u_z),\psi\rangle-\displaystyle\int^{nT}_0\langle u_z(t),\, B(t)^*\varphi_n(t;\psi)\rangle_U\mathrm dt}{\|z\|}\\
&\leq&
\sup\limits_{z\neq 0}\frac{1}{\|z\|}\left[ \|y(nT;0,z,u_z)\|\|\psi\|+\| u_z\|_{L^2(0,nT; U)}\|B(\cdot)^*\varphi_n(\cdot;\psi)\|_{L^2(0,nT; U)}\right]\\
&\leq &
\sup\limits_{z\neq 0}\frac{\|y(nT;0,z,u_z)\|}{\|z\|}\|\psi\|+\sup\limits_{z\neq 0}\frac{\| u_z\|_{L^2(0,nT; U)}}{\|z\|}\|B(\cdot)^*\varphi_n(\cdot;\psi)\|_{L^2(0,nT; U)}.
\end{eqnarray*}
This, along with \eqref{estimate-state-control}, yields \eqref{1.17}. Hence, $(ii)$ is true.

\medskip

$(ii)\Longrightarrow(i)$. Arbitrarily fix $z\in Y$. Define a  space:
$$
\widetilde{Y}\triangleq\left\{\Bigr(\psi,\, B(\cdot)^*\varphi_n(\cdot;\psi)\Bigl)
\;\Bigm|\;
\psi\in Y\right\}\subset Y\times L^2(0, nT; U),
$$
with the norm:
\begin{equation}\label{norm1}
\left\|\Bigr(\psi,\, B(\cdot)^*\varphi_n(\cdot;\psi)\Bigl)\right\|_{\widetilde{Y}}=\delta\|\psi\|
+C\|B(\cdot)^*\varphi_n(\cdot;\psi)\|_{L^2(0,nT;U)}.
\end{equation}
We next define a functional ${\cal F}$ on this space by
\begin{equation}\label{1.18}
{\cal F}\Bigr(\psi,\, B(\cdot)^*\varphi_n(\cdot;\psi)\Bigl)=\langle z,\,\varphi_n(0;\psi)\rangle\qquad\text{for any }\psi\in Y.
\end{equation}
From \eqref{1.17}-\eqref{1.18}, we see that
$\|{\cal F}\|_{\widetilde{Y}^*}\leq \|z\|$.
Then by the Hahn-Banach theorem, there is a functional $\widetilde{\cal F}$ defined on $Y\times L^2(0, nT; U)$ so that
\begin{equation}\label{1.19}
\widetilde{\cal F}\Bigr(\psi,\, B(\cdot)^*\varphi_n(\cdot;\psi)\Bigl)={\cal F}\Bigr(\psi,\, B(\cdot)^*\varphi_n(\cdot;\psi)\Bigl)\qquad\text{for any }\psi\in Y
\end{equation}
and
\begin{equation}\label{1.20}
\left\vert\widetilde{\cal F}(\xi,\eta)\right\vert\leq\|z\|(\delta\|\xi\|+C\|\eta\|_{L^2(0,nT;U)})\qquad\text{for any }(\xi,\eta)\in Y\times L^2(0, nT; U).
\end{equation}

From the Riesz representation theorem, we can find $(y_z, u_z)\in Y\times L^2(0, nT; U)$ so that
\begin{equation}\label{1.21}
\widetilde{\cal F}(\xi,\eta)=\langle y_z,\,\xi\rangle+\int^{nT}_0\langle u_z(t),\, \eta(t)\rangle_U\mathrm dt
\qquad\text{for any }(\xi,\eta)\in Y\times L^2(0, nT; U).
\end{equation}
By taking $(\xi,\eta)=\Bigr(\psi,\, B(\cdot)^*\varphi_n(\cdot;\psi)\Bigl)$
 (with $\psi\in Y$ arbitrarily fixed) in \eqref{1.21}, using
\eqref{1.19} and \eqref{1.18}, we find
\begin{eqnarray*}\label{1.23}
\langle z,\,\varphi_n(0;\psi)\rangle=\langle y_z,\,\psi\rangle+\int^{nT}_0\langle u_z(t),\,  B(t)^*\varphi_n(t;\psi)\rangle_U\mathrm dt \qquad\text{for any }\psi\in Y.
\end{eqnarray*}
From this, as well as (\ref{contsyst}) (where $u=u_z$) and \eqref{adjoint-equation}, one can easily verify
\begin{equation}\label{1.24}
y_z=y(nT;0,z,-u_z).
\end{equation}
Meanwhile, it follows from
\eqref{1.20} and \eqref{1.21} that
$$
\|y_z\|\leq\delta\|z\|\qquad\text{and}\qquad \|-u_z\|_{L^2(0, nT; U)}\leq C\|z\|.
$$
These, together with \eqref{1.24}, give \eqref{estimate-state-control}. Hence, $(i)$ is true.

Thus, we end the proof of Lemma \ref{lemma2}.
 \end{proof}

The next  Lemma \ref{lemma3}  is on the connection between the periodic stabilization and the property $(i)$ in Lemma \ref{lemma2}.
\begin{lemma}\label{lemma3}
The following assertions  are equivalent:
\begin{enumerate}[(i)]
\item The system (\ref{contsyst}) is  periodically stabilizable.
\item Given $ \delta\in(0,1)$, there is $n_\delta\in\mathbb{N}^+$ and $ C_{\delta}>0$ so that for each $z\in Y$, there is $u_z\in L^2(0,n_\delta T;U)$ satisfying
\begin{equation}\label{1.25}
\|y(n_\delta T;0,z,u_z)\|\leq\delta\|z\|\qquad\text{and}\qquad
\|u_z\|_{ L^2(0,n_\delta T;U)}\leq C_\delta\|z\|.
\end{equation}
\item There is $ \delta\in(0,1)$, $n\in\mathbb{N}$, and $ C>0$ so that for each $z\in Y$, there is $u_z\in L^2(0,nT;U)$ satisfying
\begin{equation}\label{1.26}
\|y(n T;0,z,u_z)\|\leq\delta\|z\|\qquad\text{and}\qquad
\|u_z\|_{ L^2(0,n T;U)}\leq C\|z\|.
\end{equation}
\end{enumerate}
\end{lemma}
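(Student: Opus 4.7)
The plan is to prove the chain $(ii)\Rightarrow(iii)\Rightarrow(i)\Rightarrow(ii)$. The implication $(ii)\Rightarrow(iii)$ is immediate: one fixes any $\delta\in(0,1)$ (say $\delta=1/2$) and reads off $n$ and $C$ from $(ii)$.

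For $(i)\Rightarrow(ii)$, I would invoke the feedback $K(\cdot)\in L^\infty(\mathbb{R}^+;\mathcal{L}(Y,U))$ together with the decay estimate \eqref{1.06} supplied by periodic stabilizability. Given $\delta\in(0,1)$, choose $n_\delta\in\mathbb{N}^+$ so large that $Me^{-\omega n_\delta T}\leq\delta$. For each $z\in Y$, define the candidate control $u_z(t)\triangleq K(t)\Phi_K(t,0)z$ on $[0,n_\delta T]$. Then by construction $y(t;0,z,u_z)=\Phi_K(t,0)z$, so the first inequality in \eqref{1.25} follows from \eqref{1.06}. For the second, bound
$$\|u_z\|_{L^2(0,n_\delta T;U)}\leq \|K\|_{L^\infty}\,M\,\Bigl(\int_0^{n_\delta T}e^{-2\omega t}\,\mathrm{d}t\Bigr)^{1/2}\|z\|,$$
which gives a constant $C_\delta$ depending only on $K$, $M$, $\omega$ and $n_\delta$. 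This yields $(ii)$.

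The main work lies in $(iii)\Rightarrow(i)$, and my plan is to reduce it to Lemma \ref{lemma1} by iterating the one-step property along the period grid and exhibiting $u\in L^2(0,\infty;U)$ with $J(u;z)<\infty$. Set $z_0\triangleq z$, let $u_{z_0}\in L^2(0,nT;U)$ satisfy \eqref{1.26}, and define $z_1\triangleq y(nT;0,z_0,u_{z_0})$, so $\|z_1\|\leq\delta\|z_0\|$. Recursively, given $z_k$, pick $u_{z_k}\in L^2(0,nT;U)$ from $(iii)$ and set $z_{k+1}\triangleq y(nT;0,z_k,u_{z_k})$; by induction $\|z_k\|\leq\delta^k\|z\|$ and $\|u_{z_k}\|_{L^2(0,nT;U)}\leq C\delta^k\|z\|$. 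Now assemble a global control by setting $u(t)\triangleq u_{z_k}(t-knT)$ on $[knT,(k+1)nT]$. The key structural point is the $T$-periodicity of $A(\cdot)$ and $B(\cdot)$, and hence the $nT$-periodicity identity $\Phi(t+nT,s+nT)=\Phi(t,s)$ from \eqref{1.05}, which ensures that the state corresponding to $u$ starting from $z$ at time $0$ coincides, on each window $[knT,(k+1)nT]$, with the time-shifted trajectory starting from $z_k$; in particular $y(knT;0,z,u)=z_k$ for all $k$.

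The remaining step is to bound $J(u;z)$. On each window, a Gronwall-type estimate using $(H_1)$--$(H_2)$ (together with the integral representation \eqref{1.02}) gives a constant $M'=M'(n,T,A,B)$ with
$$\int_{knT}^{(k+1)nT}\|y(s;0,z,u)\|^2\,\mathrm{d}s\leq M'\bigl(\|z_k\|^2+\|u_{z_k}\|_{L^2(0,nT;U)}^2\bigr)\leq M'(1+C^2)\delta^{2k}\|z\|^2.$$
Summing the geometric series, $J(u;z)\leq \bigl[M'(1+C^2)+C^2\bigr]\|z\|^2/(1-\delta^2)<\infty$, whence $W(z)<\infty$ for every $z\in Y$; Lemma \ref{lemma1} then yields periodic stabilizability. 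The main obstacle I anticipate is the bookkeeping for this last estimate: one must verify that the window-wise Gronwall constant is uniform in $k$ (which follows precisely from $T$-periodicity, so $\|\Phi(t,s)\|_{\mathcal{L}(Y)}$ is bounded on the diagonal strip $\{0\leq t-s\leq nT\}$) and that the concatenated $u$ lies in $L^2(0,\infty;U)$, both of which drop out of the geometric decay $\|z_k\|\leq\delta^k\|z\|$.
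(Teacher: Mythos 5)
Your proposal is correct and follows essentially the same route as the paper: the implication $(i)\Rightarrow(ii)$ via the feedback control $u_z(t)=K(t)\Phi_K(t,0)z$ with $n_\delta$ chosen from the decay rate in \eqref{1.06}, and $(iii)\Rightarrow(i)$ by iterating the one-step contraction on windows of length $nT$, concatenating the controls, bounding $J(\cdot;z)$ by a geometric series, and invoking Lemma \ref{lemma1}. The only cosmetic difference is the order of the cyclic chain.
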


\begin{proof}
$(i)\Longrightarrow(ii)$. Since (\ref{contsyst}) is  periodically stabilizable, there is $T$-periodic
$K(\cdot)\in L^\infty(\mathbb{R}^+;{\cal L}(Y,U))$ satisfying \eqref{1.06}.
Note that  $\Phi_K(\cdot, 0)z$, with  $z\in Y$, is the solution to the  equation:
$$
\begin{cases}
 y'(t)=[A(t)+B(t)K(t)]y(t),\qquad t\geq0,\\
 y(0)=z.
 \end{cases}
 $$
Define, for each  $z\in Y$, a control $u_z: \mathbb{R}^+\rightarrow U$ by
\begin{eqnarray}\label{20,12-14}
u_z(t)=K(t)\Phi_K(t,0)z\;\;\mbox{for a.e.}\; t\in \mathbb{R}^+.
\end{eqnarray}
Then we have that  for each  $z\in Y$,
\begin{eqnarray}\label{25,12-7}
\Phi_K(t, 0)z=y(t;0,z;u_z)\;\;\mbox{for each}\;\;t\geq 0.
\end{eqnarray}
Arbitrarily fix $\delta\in(0,1)$. Let
\begin{equation*}
n_\delta=\left[\frac{1}{\omega}\ln \frac{M}{\delta}\right]+1,\qquad
C_\delta=\frac{M}{\sqrt{2\omega}}\|K\|_{L^\infty(\mathbb{R}^+;{\cal L}(Y,U))}
\end{equation*}
where $M$ and $\omega$ are given by \eqref{1.06}.
Then by \eqref{25,12-7}, \eqref{20,12-14} and \eqref{1.06}, after some direct computations, we get
 \eqref{1.25}.

$(ii)\Longrightarrow(iii)$. It is clear.

\medskip

 $(iii)\Longrightarrow(i)$.
 Suppose that $(iii)$ holds for some $\delta\in (0,1)$, $n\in \mathbb{N}^+$ and $C>0$. We claim that
 $W(\cdot)$, given by \eqref{1.15},  is finite valued.
  When this is done, we can apply  Lemma \ref{lemma1} to get $(i)$ of this lemma.

Now we prove the above claim. Arbitrarily fix $z\in Y$. Set $z_0\triangleq z$. By $(iii)$, we find
$u_1\triangleq u_{z_0}\in L^2(0,nT;U)$ satisfying \eqref{1.26} with $z=z_0$. Following this way, we can inductively get
     $\{u_k\}_{k=1}^\infty\subset L^2(0, nT;U)$  and $\{z_k\}_{k=0}^\infty\subset Y$ so that
     \begin{eqnarray}\label{27,12-8}
     \|z_k\|\leq \delta^k\|z_0\|\;\;\mbox{and}\;\;\|u_k\|_{L^2(0, nT;U)}\leq C\|z_{k-1}\|\leq C\delta^{k-1}\|z_0\|\;\;\mbox{for all}\; k\in \mathbb{N}^+;
     \end{eqnarray}
     and so that
      \begin{eqnarray}\label{28,12-8}
      z_k=y(nT;0,z_{k-1},u_{k})\quad \text{and}\quad u_{k+1}=u_{z_k}\quad\;\;\mbox{for all}\; k\in \mathbb{N}^+.
     \end{eqnarray}
Define a control $\tilde u_z: \mathbb{R}^+\rightarrow U$ by
 \begin{eqnarray}\label{29,12-8}
\tilde u_z(knT+t)=u_{k+1}(t), \; t\in [0,nT),\;\;\;k\in \mathbb{N}.
\end{eqnarray}
Then by \eqref{29,12-8} and \eqref{27,12-8}, we see
 \begin{eqnarray}\label{30,12-8}
\|\tilde u_z\|_{L^2(0,+\infty; U)}^2=\sum\limits_{k=0}^{+\infty}\|u_{k+1}\|_{L^2(0,nT; U)}^2<\infty.
\end{eqnarray}
 By \eqref{29,12-8}, \eqref{1.05}, \eqref{28,12-8},
\eqref{27,12-8} and \eqref{30,12-8}, we find
\begin{eqnarray}\label{31,12-8}
&&\|y(\cdot;0,z,\tilde u_z)\|_{L^2(0,+\infty; Y)}^2\nonumber\\
&=&
\sum\limits_{k=0}^{+\infty}\int^{nT}_0\left\|\Phi(t,0)z_k+\int^t_0\Phi(t,\tau)B(\tau)u_{k+1}(\tau)\mathrm d\tau\right\|^2\mathrm dt\nonumber\\
&\leq&
\sum\limits_{k=0}^{+\infty}\int^{nT}_02\left[\max\limits_{0\leq  t\leq nT}\|\Phi(t,0)\|_{\mathcal{L}(Y)}^2\|z_k\|^2+\int^t_0\|\Phi(t,\tau)B(\tau)\|_{\mathcal{L}(U,Y)}^2\mathrm d\tau\int^t_0\|u_{k+1}(\tau)\|_U^2\mathrm d\tau\right]\mathrm dt\nonumber\\
&\leq&
2nT\max\limits_{0\leq \tau\leq t\leq nT}\|\Phi(t,\tau)\|_{\mathcal{L}(Y)}^2
(1+nT\|B\|_{L^\infty}^2)\sum\limits_{k=0}^{+\infty}\left[\|z_k\|^2+\|u_{k+1}\|^2_{L^2(0, nT;U)}\right]\nonumber\\
&\leq&
2nT\max\limits_{0\leq \tau\leq t\leq nT}\|\Phi(t,\tau)\|_{\mathcal{L}(Y)}^2(1+nT\|B\|_{L^\infty}^2)(1+C^2)\sum\limits_{k=0}^{+\infty}\|z_k\|^2<\infty.
\end{eqnarray}

Now, from \eqref{1.13},  \eqref{30,12-8} and \eqref{31,12-8}, we see
that $J(\tilde u_z,z)<+\infty$ for all $z\in Y$. Thus, we have $W(z)<\infty$ for all $z\in Y$,
which leads to the desired claim.

 Thus, we end the proof.
\end{proof}

Now we begin to prove Theorem \ref{maintheorem1}.
\begin{proof}
"$(E_1)\Longrightarrow(E_2)$".
First of all, by the $T$-periodicity of $\Phi(\cdot,\cdot)$
and $B(\cdot)$ (see \eqref{1.05} and  $(H_2)$), we have that
\begin{eqnarray}\label{w32,12-8}
\Phi(t,s)^*=\Phi(r,s)^*\Phi(t,r)^*,\; \Phi(t,s)^*=\Phi(T+t,T+s)^*\;\;\mbox{for any}\; t\geq r\geq s\geq 0,
\end{eqnarray}
and that for any $n\in\mathbb{N}^+$ and any  $\ell,k\in \mathbb{N}^+$, with $k\geq \ell$,
\begin{eqnarray}\label{w33,12-8}
B(t)^*\Phi(\ell nT,t)^*=B((k-\ell)nT+t)^*\Phi(knT,(k-\ell)nT+t)^*\;\;\mbox{for any}\; t\in [0,nT].
\end{eqnarray}

Suppose that $(E_1)$ holds.
Let $\hat\delta\in (0,1)$.  CLAIM ONE: There is
$n_{\hat\delta}\in\mathbb{N}$ and $ C_{\hat\delta}>0$ so that \eqref{1.10} (where $\delta=\hat\delta$)
holds.

To show the above claim, we set
\begin{eqnarray}\label{32,12-8}
\delta\triangleq\hat\delta/\sqrt{2}\in(0,1)
\end{eqnarray}
By   Lemma \ref{lemma3} and $(E_1)$,
We have $(ii)$ of Lemma \ref{lemma3}, in particular, for $\delta$ given by \eqref{32,12-8}, we have
\begin{eqnarray}\label{35,12-9}
n_{\delta}\in\mathbb{N}\;\;\mbox{and}\;\; C_{\delta}>0
\end{eqnarray}
 so that
\eqref{1.25} holds. This, along with  Lemma \ref{lemma2}, yields that
\begin{equation}\label{36,12-9}
\|\varphi_{n_\delta}(0;\psi)\|\leq\delta\|\psi\|
+C_\delta\|B(\cdot)^*\varphi_{n_\delta}(\cdot;\psi)\|_{L^2(0,{n_\delta}T;U)}\qquad {\mbox{for any }}\psi\in Y,
\end{equation}
where $\delta$ is given by \eqref{32,12-8}, $n_\delta$ and $C_\delta$ are given by \eqref{35,12-9}.
By \eqref{36,12-9} and the Cauchy-Schwarz inequality,  we find
\begin{equation}\label{37,12-9}
\|\varphi_{n_\delta}(0;\psi)\|^2\leq 2 \delta ^2\|\psi\|^2
+2C_\delta^2\|B(\cdot)^*\varphi_{n_\delta}(\cdot;\psi)\|^2_{L^2(0,{n_\delta}T;U)}\qquad {\mbox{for any }}\psi\in Y.
\end{equation}
Arbitrarily fix $z\in Y$ and $k\in\mathbb{N}$. Taking  $\psi=\left[\Phi(n_\delta T,0)^*\right]^{\ell-1}z$,
with $\ell=1,\cdots,k$, in \eqref{37,12-9}, then multiplying the both sides by $(2\delta^2)^{k-\ell}$, using
 \eqref{w32,12-8}, we obtain that when $\ell=1,\cdots,k$,
\begin{eqnarray}\label{1.32}
&&(2\delta^2)^{k-\ell}\|\varphi_{\ell n_\delta}(0;z)\|^2\nonumber\\
&=&
(2\delta^2)^{k-\ell} \left\|\varphi_{n_\delta}(0;\left[\Phi(n_\delta T,0)^*\right]^{\ell-1}z)\right\|^2\nonumber\\
&\leq&
(2\delta^2)^{k-\ell+1}\left\|\left[\Phi(n_\delta T,0)^*\right]^{\ell-1}z\right\|^2
+(2\delta^2)^{k-\ell}2C_\delta^2\|B(\cdot)^*\varphi_{n_\delta}(\cdot;\,[\Phi(n_\delta T,0)^*]^{\ell-1}z)\|^2_{L^2(0,{n_\delta}T;U)}\nonumber\\
&\leq&
(2\delta^2)^{k-\ell+1}\|\varphi_{(\ell-1) n_\delta}(0;z)\|^2+
2C_\delta^2\|B(\cdot)^*\Phi(\ell n_\delta T, t)^*z\|_{L^2(0,{n_\delta}T;U)}^2.
\end{eqnarray}
(In the last inequality of \eqref{1.32}, we used the fact: $2\delta^2<1$.)
Meanwhile, by \eqref{w32,12-8} and \eqref{w33,12-8}, we see that  when $\ell=1,\cdots,k$,
\begin{eqnarray}\label{39,12-9}
\int^{n_\delta T}_0\|B(t)^*\Phi(\ell n_\delta T, t)^*z\|_U^2\mathrm dt
&=&\int^{(k-\ell +1)n_\delta T}_{(k-\ell)n_\delta T}\|B(\hat t)^*\Phi(k n_\delta T,\hat t)^*z\|_U^2\mathrm d{\hat t}\nonumber\\
&=& \|B(\cdot)^*\varphi_{kn_\delta}(\cdot;z)\|^2_{L^2((k-\ell)n_\delta T,(k-\ell+1){n_\delta}T;U)}.
\end{eqnarray}
Thus, by \eqref{39,12-9} and  \eqref{1.32},  we find that  when $\ell=1,\cdots,k$,
\begin{eqnarray}\label{1.32-1}
&&(2\delta^2)^{k-\ell}\|\varphi_{\ell n_\delta}(0;z)\|^2\nonumber\\
&\leq&
(2\delta^2)^{k-\ell+1}\|\varphi_{(\ell-1) n_\delta}(0;z)\|^2+2C^2_\delta
 \|B(\cdot)^*\varphi_{kn_\delta}(\cdot;z)\|^2_{L^2((k-\ell)n_\delta T,(k-\ell+1){n_\delta}T;U)}.
\end{eqnarray}
Taking the sum from $\ell=1$ to  $\ell=k$ in \eqref{1.32-1}, we get
\begin{eqnarray}\label{1.33}
\|\varphi_{kn_\delta}(0;\psi)\|^2
&\leq& (2\delta^2)^{k}\|z\|^2+
2C_\delta^2\|B(\cdot)^*\varphi_{kn_\delta}(\cdot;z)\|^2_{L^2(0,k{n_\delta}T;U)}\nonumber\\
&\leq&
 \left[(\sqrt{2}\delta)^{k}\|z\|+
\sqrt{2}C_\delta\|B(\cdot)^*\varphi_{kn_\delta}(\cdot;z)\|^2_{L^2(0,k{n_\delta}T;U)}\right]^2.
\end{eqnarray}
Now, by \eqref{1.33} and by letting $n_{\hat\delta}=n_\delta$ and $C_{\hat\delta}=\sqrt{2}C_\delta$
(where $n_\delta$ and $C_\delta$ are given by \eqref{35,12-9}), we get CLAIM ONE.
Hence, $(E_2)$ is true.

 $(E_2)\Longrightarrow  (E_3)$.   It is trivial.

   $(E_3)\Longrightarrow  (E_1)$.
  According to Lemma
\ref{lemma2},
    $(E_3)$ is equivalent  to $(iii)$ of Lemma \ref{lemma3}. Then by  Lemma \ref{lemma3}, we get $(E_1)$.

    In summary,   we complete the proof of Theorem \ref{maintheorem1}.
\end{proof}

\section{Further discussions}
\subsection{On the detectability inequality} \label{subsection3.1}

In this subsection, we will explain why we call \eqref{1.10} (or \eqref{1.11}) as a detectability inequality.

The concept of the detectability arises from the finite-dimensional  system.
Let $A\in \mathbb{R}^{n\times n}$ and $B\in \mathbb{R}^{n\times m}$. Consider the system:
\begin{eqnarray}\label{40,12-12}
z'(t)=A^*z(t);\; w(t)=B^*z(t),\;\;t\geq 0.
\end{eqnarray}
We quote the concept of detectability from  \cite{Hautus}: {\it The system
\eqref{40,12-12} is detectable, if
\begin{equation}\label{1.12-6}
w(\cdot)=0\; \mbox{over}\; [0,\infty) \Longrightarrow \lim\limits_{t\rightarrow+\infty}z(t)=0.
\end{equation}}
For the pair of matrices $[A,B]$, the following statements are equivalent: (See  \cite{Hautus} or \cite{Sontag}.)
\begin{itemize}
\item[$(a_1)$] The system $y'(t) = Ay(t)+Bu(t),\; t\geq 0$ is stabilizable.
\item [$(a_2)$] The system \eqref{1.12-6} is detectable.
\item [$(a_3)$] There is $L\in \mathbb{R}^{n\times m}$ so that
$A^*+LB^*$ is stable.
\end{itemize}
With the aid of  the above equivalence, the concept of the detectability for
infinite-dimensional  time-invariant systems was extended in \cite{Weiss}.
Motivated by the extension in \cite{Weiss}, we  define  the detectability for
$[A(\cdot),B(\cdot)]$ in the infinitely-dimensional periodic setting
 $(H_1)$-$(H_3)$ in the following manner:

{\it The system
\begin{eqnarray}\label{42,12-12}
z'(t)=A(t)^*z(t);\;\;  w(t)=B(t)^*z(t),\;\;t\geq 0
\end{eqnarray}
is detectable, if there is a
$T$-periodic operator-valued function
$L(\cdot)\in L^\infty(\mathbb{R}^+;{\cal L}(Y,U))$ so that the system
\begin{equation*}\label{1.12-4}
z'(t) = \left[A(t)^*+L(t)B(t)^*\right]z(t),\qquad t\geq0,
\end{equation*}
 is exponentially stable.}

 From the above definition and Theorem \ref{maintheorem1}, we can easily verify what follows:

 \begin{itemize}
 \item[$(b_1)$] The system \eqref{contsyst}
 is periodically stabilizable if and only if the system \eqref{42,12-12} is detectable.
 \end{itemize}
 (Here, we used the fact: $\varphi_n(nT-\cdot,\psi)$  solves the first equation of \eqref{42,12-12}, over $[0,nT]$, with the initial condition $z(0)=\psi$.)
  Thus, the inequality \eqref{1.10} (or \eqref{1.11}) is equivalent to the detectability
 of \eqref{42,12-12}. {\it This is why we call \eqref{1.10} (or \eqref{1.11}) as a detectability inequality.}

For $[A(\cdot),B(\cdot)]$ in the infinitely-dimensional periodic setting  $(H_1)$-$(H_3)$,
it deserves mentioning the following connection between "detectability of \eqref{42,12-12}" and \eqref{1.12-6} (where $(z,w)$ solves \eqref{42,12-12}):

\begin{itemize}
\item [$(c_1)$] We have that "detectability of \eqref{42,12-12}" $\Longrightarrow$ \eqref{1.12-6}.  In fact, by $(a_3)$ and $(b_1)$, we see that
 the system \eqref{contsyst}
 is periodically stabilizable. Then  given  $\delta\in(0,1)$, we can use Theorem 1 to find $n_\delta\in\mathbb{N}^+$ and $ C_{\delta}>0$ so that
\eqref{1.10} holds for any  $k\in\mathbb{N}$.
Meanwhile, arbitrarily fix $k\in \mathbb{N}^+$ and $\psi\in Y$. Let
$$
z(t;k,\psi)=\varphi_{kn_\delta}(kn_\delta T-t;\psi),\; \; t\in [0, kn_\delta T].
$$
Then $z(\cdot;k,\psi)$ solves the first equation of \eqref{42,12-12}, over $[0,kn_\delta T]$, with the initial condition $z(0)=\psi$.
This, along with \eqref{1.10}, yields
\begin{eqnarray*}\label{1.12-5}
\|z(kn_\delta T)\|\le\delta^k\|z(0)\|
+C_\delta\|w(\cdot)\|_{L^2(0,kn_\delta T;U)},
\end{eqnarray*}
which leads to
$$
\lim\limits_{k\rightarrow+\infty}z(kn_\delta T)=0, \;\;\mbox{when}\;\; w(\cdot)=0.
$$
So \eqref{1.12-6} is true.
\item [$(c_2)$] It is not true that \eqref{1.12-6} $\Longrightarrow$  "detectability of \eqref{42,12-12}".
Here is an counterexample.
Let $B(\cdot)=0$ and $D(\cdot)=0$. Take an operator $A$ so that the system $\dot y=Ay$ is polynomially  stable  but not exponentially stable. Then we clearly have  \eqref{1.12-6}, but have no the stabilization.
Then by $(b_1)$,  the system \eqref{42,12-12} is not detectable in this case.
\end{itemize}

\subsection{Connection of  detectability inequality and  unique continuation}

In this subsection, besides $(H_1)$-$(H_3)$,   we further assume
\begin{enumerate}[($H_4$)]
\item The operator $A$ is compact.
\end{enumerate}
We will present an equivalence between the detectability inequality \eqref{1.10} (or \eqref{1.11}) and
a qualitative unique continuation property for the adjoint system \eqref{adjoint-equation}, under the setting
$(H_1)$-$(H_4)$. This qualitative unique continuation was introduced in \cite{WangXu} (see also \cite{WangXu-book}).

We start with introducing some notation:
We write $Y^C$ for the complexification of $Y$. For each $L\in{\cal L}(Y)$, we denote by
$L^C$ the complexification of $L$.
We write $\mathbb{B}$ for the open unit ball in $\mathbb{C}^1$ and $\mathbb{B}(0,\delta)$  for the open ball in $\mathbb{C}^1$, centered at the origin and of radius $\delta>0$.

We next introduce some concepts.
\begin{enumerate}[(I)]
\item  {\it The Poincar\'{e} map.} ~
We recall the following Poincar\'{e} map (see  Page 197, \cite{Henry}):
\begin{equation*}\label{poincare}
{\cal P}(t) \triangleq\Phi(t+T,t),\;\;t\in\mathbb{R}^+.
\end{equation*}
We have
\begin{equation*}\label{WGS1.8}
\sigma({\cal P}(t)^C)\setminus\{0\}=\{\lambda_j\}^\infty_{j=1}\;\;\mbox{for each}\;\; t\geq 0,
\end{equation*}
where $\lambda_j$, $j=1,2,\dots$, are all distinct non-zero eigenvalues of the compact operator ${\cal P}(0)^C$ so that $\lim_{j\rightarrow\infty}|\lambda_j|=0$. Thus, there is a unique $n\in \mathbb{N}^+$ so that
 \begin{equation*}\label{wgs1.9}
 |\lambda_j|\ge1,\quad j\in\{1,2,\cdots, n\}\quad~{\mbox{and}}~\quad |\lambda_j|<1,\quad j\in\{n+1,n+2,\cdots \}.
 \end{equation*}
Set
 \begin{equation}\label{decay}
 \bar \delta\triangleq \max\{|\lambda_j|\; |\;  j>n\}<1.
 \end{equation}
 Let $l_j$ be the algebraic multiplicity of $\lambda_j$ for each $j\in \mathbb{N}^+$, and write
 \begin{equation}\label{number}
n_0\triangleq l_1+ \cdots +l_n.
 \end{equation}
 \item
 {\it The Kato projection.} ~ Arbitrarily fix  $\delta\in (\bar \delta, 1)$, where $\bar \delta$ is given by (\ref{decay}).   Let
$\Gamma$ be the circle $\partial \mathbb{B}\left(0, \delta\right)$ with the anticlockwise direction in $\mathbb{C}^1$.
The  Kato projection is given by (see \cite{Kato})
\begin{equation*}\label{wgs1.11}
{\cal K}(t) =\displaystyle\frac{1}{ 2\pi i}\int_\Gamma (\lambda \mathrm{id}-{\cal P}(t)^C)^{-1}d\lambda,\;\; t\geq 0.
\end{equation*}
Let
\begin{eqnarray*}
P(t)\triangleq \bigr(I_Y-{\cal K}(t)\bigl)\bigm|_Y\;\;(\mbox{the restriction of}\;(\mathrm{I_Y}-{\cal K}(t))\;\mbox{on}\; Y),\;\; t\geq 0,
\end{eqnarray*}
where $I_Y$ is the identity operator on $Y$. Let
\begin{eqnarray}\label{wgs1.15}
P\triangleq P(0)\;\;\mbox{and}\;\;Y_{u}\triangleq P(0)Y.
\end{eqnarray}

  \end{enumerate}

\begin{theorem}\label{theorem2,12-13}
Suppose that $(H_1)$-$(H_4)$ hold. Let $P$ and $Y_{u}$  be given by (\ref{wgs1.15}). Let  $n_0$  be given by   (\ref{number}).
Then the following statements are equivalent:

\noindent $(i)$ There is $ \delta\in(0,1)$,  $n\in\mathbb{N}^+$  and $C>0$ so that the  detectability inequality \eqref{1.10} holds.

\noindent $(ii)$  If  $\xi\in P^*Y_{u}$ and $B^*(\cdot)\Phi(n_0T, \cdot)^*\xi=0$ over $(0,n_0T)$, then $\xi=0$.

\end{theorem}
\begin{proof}  According to Theorem \ref{maintheorem1}, the periodic stabilization of \eqref{contsyst} is equivalent to $(i)$ in Theorem \ref {theorem2,12-13}. Meanwhile, according to Theorem 2.1 in \cite{WangXu-book},
the periodic stabilization of \eqref{contsyst} is equivalent to $(ii)$ in Theorem \ref {theorem2,12-13}.
  Hence, the statements $(i)$ and $(ii)$ in Theorem \ref {theorem2,12-13}
 are equivalent. This ends the proof.
\end{proof}

\subsection{Example}
We will give a time periodic controlled heat equation which can be put into our framework $(H_1)$-$(H_3)$,
and which is not null controllable and not stable with the null control, but satisfies the  detectability inequality,
consequently, is periodically stabilizable (by Theorem \ref{maintheorem1}).

\begin{example}
Consider the following heat equation
\begin{equation}\label{48,12-13}
\begin{cases}
y_t(x,t)-\left(\triangle+3\sin^2t\right ) y(x,t)=u(t)\sin x,\quad& (x,t)\in (0,\pi)\times(0,\infty),\\
y(0,t)=y(\pi,t)=0,& t\in (0,\infty),\\
y(\cdot,0)\in L^2(0,\pi).
\end{cases}
\end{equation}
The equation \eqref{48,12-13} can be put into the setting $(H_1)$-$(H_3)$ in the following manner:
Let  $Y=L^2(0,\pi)$; $U=\mathbb{R}$ and $T=\pi$. Let $A=\triangle$, with $D(A)=H^2(0,\pi)\bigcap H^1_0(0,\pi)$;   $D(t)=(3\sin^2t)I_Y$ for each $t\geq 0$; $B(t)\equiv B$ for all $t\geq 0$, where $B: U\rightarrow Y$ is defined by $B(\alpha)=\alpha \sin x$, $\alpha\in U$.

First, we show that \eqref{48,12-13}, with $u=0$, is not stable. To this end,
we let $y$ be the solution to \eqref{48,12-13}, where $u=0$ and $y(x,0)=\sin x$, $x\in (0,\pi)$.
Write
\begin{eqnarray}\label{wang44,12-14}
a(t)\triangleq\int_0^\pi y(x,t)\sin xdx,\;\;t\geq 0.
\end{eqnarray}
Then by \eqref{48,12-13} and \eqref{wang44,12-14}, we find that
\begin{eqnarray}\label{49,12-13}
\dot a(t)=(-1+3\sin^2t)a(t),\;\;t\geq 0;\;\;\;\; a(0)>0.
\end{eqnarray}
Since
\begin{eqnarray*}
\int_0^t(-1+3\sin^2s)ds=\frac{t}{2}-\frac{3}{4}\sin(2t),\; t\geq 0,
\end{eqnarray*}
it follows from \eqref{49,12-13} that
$$
a(t)=e^{\frac{t}{2}-\frac{3}{4}\sin(2t)}a(0),\;\;t\geq 0;\;\;\;\;a(0)>0.
$$
These yields that that $\lim_{t\rightarrow\infty}a(t)=\infty$.
From this, we see  that
\eqref{48,12-13}, where $u=0$, is unstable.

 Next, we  show that  \eqref{48,12-13} is not null controllable.
For this purpose, we arbitrarily fix a control $u$. Let $y(x,t;u)$ be the solution of \eqref{48,12-13}
with the aforementioned $u$ and with  $y(x,0)=\sin (2x)$ ($x\in (0,\pi)$).
 Write
$$
z(t;u)\triangleq\int^\pi_0y(x,t;u)\sin(2x)\mathrm dx,\quad t\geq 0.
$$
Then by \eqref{48,12-13}, we have
\begin{eqnarray}\label{50,12-13}
\begin{cases}
\dot z(t;u)=(3\sin^2t-4)z(t;u),\;\;t\geq 0,\\
z(0;u)={\pi}/{2}.
\end{cases}
\end{eqnarray}
From \eqref{50,12-13}, we see that
$z(t;u)\neq 0$ for any $t\geq 0$. Thus, for any $u$ and any $t\geq 0$, we have $y(\cdot,t;u)\neq 0$. So
 \eqref{48,12-13} is not null controllable.

Now we show the detectability inequality \eqref{1.11} holds for this example.
To this end, we take
\begin{eqnarray}\label{51,12-13}
\delta=e^{-\pi};\qquad  n=1;\qquad C=2e^{2\pi}.
\end{eqnarray}
Since $T=\pi$ in this example, we see from \eqref{51,12-13} that $nT=\pi$. So the adjoint equation
\eqref{adjoint-equation} in the current case reads as:
\begin{equation}\label{1.70}
\varphi_t(x,t)+\left(\triangle+3\sin^2t)\right ) \varphi(x,t)=0\quad (x,t)\in(0,\pi)\times [0,\pi];
\qquad \varphi(x,\pi)=\psi,
\end{equation}
where $\psi \in L^2(0,\pi)$.
Write $\varphi(x,t;\psi)$ ($x\in (0,\pi)$, $t\in [0,\pi]$) for the solution to \eqref{1.70}. Then it is exactly
the solution $\varphi_n(\cdot;\psi)$ to \eqref{adjoint-equation} with $n=1$.
Let
\begin{equation}\label{53,12-13}
\varphi(x,t;\psi)=\sum\limits_{k=1}^{\infty}a_k(t)\sin kx,\; x\in (0,\pi), t\in [0,\pi].
\end{equation}
By \eqref{53,12-13} and \eqref{1.70}, we have that for each $k\in \mathbb{N}^+$,
$$
\dot a_k(t)=(k^2-3\sin^2t)a_k(t),\;\;t\in [0,\pi].
$$
From the above, one has
\begin{equation}\label{54,12-13}
|a_k(0)|\leq e^{-(k^2-3)\pi}|a_k(\pi)|,\;\;k\in\mathbb{N};\qquad\;\; |a_1(t)|\geq e^{t-\pi} |a_1(\pi)|,\;\; t\in[0,\pi].
\end{equation}
By \eqref{54,12-13} and \eqref{53,12-13}, we can easily check that
\begin{eqnarray}\label{55,12-13}
\|\varphi_n(0;\psi)\|^2
&=&\int_0^\pi\varphi^2(x,0;\psi)dx \nonumber\\
&=&\frac{\pi}{2}
\sum\limits_{k=1}^\infty a_k(0)^2\nonumber\\
&\leq&\frac{\pi}{2}\left[
e^{4\pi}a_1(\pi)^2+\sum\limits_{k=2}^\infty e^{-2(k^2-3)\pi}a_k(\pi)^2\right]\nonumber\\
&\leq&\frac{\pi}{2}\left[
e^{4\pi}a_1(\pi)^2+\sum\limits_{k=1}^\infty e^{-2\pi}a_k(\pi)^2\right]\nonumber\\
&=&\frac{\pi}{2}
e^{4\pi}a_1(\pi)^2+e^{-2\pi}\|\psi\|^2
\end{eqnarray}
and
\begin{eqnarray}\label{56,12-13}
\|B(\cdot)^*\varphi_n(\cdot;\psi)\|_{L^2(0,\pi)}^2
&=&
\int^\pi_0a_1(t)^2
\left(\frac{\pi}{2}\right)^2\mathrm dt\nonumber\\
&\geq &
\frac{\pi^2}{8}(1-e^{-2\pi})a_1(\pi)^2\nonumber\\
&\geq &
\frac{\pi}{8}a_1(\pi)^2.
\end{eqnarray}
From \eqref{55,12-13} and  \eqref{56,12-13}, we  get \eqref{1.11}.

Finally, by  Theorem \ref{maintheorem1} and  \eqref{1.11}, we see that
 the system \eqref{48,12-13} is periodically stabilizable.
\end{example}

\end{document}